\newtheorem{thm}{Theorem}
\newtheorem{lem}{Lemma}
\newtheorem{prop}{Proposition}
\newtheorem{cor}{Corollary}
\newtheorem{rmk}{Remark}
\def\ex{{\noindent\bf Example. \ }}
\numberwithin{equation}{section}
\begin{document}

%  Leave these commented lines here
% \input{elaheader-volx-xx.tex}
% \setcounter{page}{1}

% \renewcommand{\thefootnote}{\fnsymbol{footnote}}
% \renewcommand{\thefootnote}{\arabic{footnote}}
% \renewcommand{\theequation}{\thesection.\arabic{equation}}

\bibliographystyle{plain}
\title{Wold decompositions for operators close to isometries}\thanks{Received by the editors on Month x, 200x.
Accepted for publication on Month y, 200y   Handling Editor: .}
% Leave blank; editors will write the exact dates above

\author{
Laura G\u avru\c ta\thanks{Fakult\"at fur Mathematik, University of Viena, Oskar-Morgenstern-Platz 1,
1090 Wien, Austria. \newline Actual adress: Politehnica University of Timi\c soara, Pia\c ta Victoriei, no. 2, Romania (gavruta\_laura@yahoo.com).
\newline The author was supported by the FWF project P 24986-N25
}}
% Remember to put \and between any two authors

% Note that \footnotemark[3]} is used for the third author
% because of the same affiliation for the second and third authors.
% If the same affiliation is to be used for the first and second authors,
% \footnotemark[2] should be used instead of \thanks{} for the second author.

% Authors and running title to go on top of each page
\pagestyle{myheadings}
\markboth{L. G\u avru\c ta}{Wold decompositions for operators close to isometries}
\maketitle

\begin{abstract}
In Bergman and Dirichlet spaces, the shift operator is not an isometry, but it is a left invertible operator. In this paper we give conditions on the left invertible operators such that a operator version, in the sense of Rosenblum and Rovnyak, of the Wold decomposition to take place.
\end{abstract}

\begin{keywords}
Wold decomposition, invariant subspace, wandering subspace
\end{keywords}
\begin{AMS}
47A15.
\end{AMS}

%%%%%%%%%%%%%%%%%%%%%%%%%%%%%%%%%%%%%%%%%%%%%%%%%%%%%%%%%%%%%
\section{Introduction} \label{intro-sec}

The Wold Decomposition Theorem \cite{Wold} applies to the analysis of stationary random processes. It provides a representation of such processes and also an interpretation of the representation in terms of linear prediction: an arbitrary unpredictable process can be written as an orthogonal sum of a regular process and a predictable process \cite{Wold}.

In 1961, Paul R. Halmos \cite{Halmos1} gives the following form of the Wold decomposition Theorem in operator language:\\

\begin{theorem}\label{Wthm}Let $V$ be an isometry on a Hilbert space $\mathcal{H}.$ Then there is a decomposition of $\mathcal{H}$ as a direct sum of two mutually orthogonal subspaces $$\mathcal{H}=\mathcal{H}_{\infty}\bigoplus\mathcal{H}_s$$ such that
\begin{enumerate}[($i$)]
\item $\mathcal{H}_{\infty}$ and $\mathcal{H}_s$ reduce $V$.
\item   The restriction of $V$ to $\mathcal{H}_\infty$ is an unitary operator.
\item The restriction of $V$ to $\mathcal{H}_s$ is unitarily equivalent to an unilateral shift.
\end{enumerate}
The decomposition is unique.
\end{theorem}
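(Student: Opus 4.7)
\medskip

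\noindent\textbf{Proof plan.}
The plan is to build the decomposition explicitly from the \emph{wandering subspace} of $V$. First I would set $\mathcal{L} := \mathcal{H}\ominus V\mathcal{H}=\ker V^*$, which is well defined because $V\mathcal{H}$ is closed (any isometry has closed range). The key elementary observation is that the subspaces $V^{n}\mathcal{L}$, $n\ge 0$, are pairwise orthogonal: for $n>m\ge 0$ and $x,y\in\mathcal{L}$ one has $\langle V^{n}x,V^{m}y\rangle=\langle V^{n-m}x,y\rangle$, and since $y\in(V\mathcal{H})^{\perp}$ while $V^{n-m}x\in V\mathcal{H}$, this inner product is zero. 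Iterating the orthogonal decomposition $\mathcal{H}=\mathcal{L}\oplus V\mathcal{H}$ (and applying the isometry $V$ to each side) gives, for every $n\ge 1$,
\begin{equation*}
\mathcal{H}\;=\;\bigoplus_{k=0}^{n-1}V^{k}\mathcal{L}\;\oplus\;V^{n}\mathcal{H}.
\end{equation*}
Passing to the limit then yields the candidate decomposition $\mathcal{H}=\mathcal{H}_s\oplus\mathcal{H}_\infty$ with $\mathcal{H}_s:=\bigoplus_{n\ge 0}V^{n}\mathcal{L}$ and $\mathcal{H}_\infty:=\bigcap_{n\ge 0}V^{n}\mathcal{H}$.

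Next I would verify properties $(i)$--$(iii)$. For $(i)$ both subspaces are clearly $V$\nobreakdash-invariant from the definitions; $V^{*}$\nobreakdash-invariance of $\mathcal{H}_s$ follows from $V^{*}V^{n}\mathcal{L}=V^{n-1}\mathcal{L}$ for $n\ge 1$ and $V^{*}\mathcal{L}=\{0\}$, while for $\mathcal{H}_\infty$ I would write each $x\in\mathcal{H}_\infty$ as $x=V^{n}y_{n}$ and conclude $V^{*}x=V^{n-1}y_{n}\in V^{n-1}\mathcal{H}$ for every $n\ge 1$. For $(iii)$, the map $V^{n}\ell\mapsto (0,\dots,0,\ell,0,\dots)$ (with $\ell$ in the $n$-th slot) extends to a unitary from $\mathcal{H}_s$ onto $\bigoplus_{n\ge 0}\mathcal{L}\simeq\ell^{2}(\mathbb{Z}_{\ge 0};\mathcal{L})$ that intertwines $V|_{\mathcal{H}_s}$ with the unilateral shift of multiplicity $\dim\mathcal{L}$. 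For $(ii)$, $V|_{\mathcal{H}_\infty}$ is an isometry by hypothesis; surjectivity follows because any $x\in\mathcal{H}_\infty\subset V\mathcal{H}$ is $Vy$ with $y=V^{*}x\in\mathcal{H}_\infty$ (by the computation just done).

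Finally, for uniqueness suppose $\mathcal{H}=\mathcal{K}_\infty\oplus\mathcal{K}_s$ is another decomposition with the listed properties, and let $\mathcal{M}$ be the wandering subspace of the unilateral shift $V|_{\mathcal{K}_s}$. Then $V|_{\mathcal{K}_\infty}$ being unitary forces $\mathcal{K}_\infty\subset V\mathcal{H}$, so $(V\mathcal{H})^{\perp}\subset\mathcal{K}_s$; combined with the reducing property this identifies $\mathcal{M}=\mathcal{H}\ominus V\mathcal{H}=\mathcal{L}$, and therefore $\mathcal{K}_s=\bigoplus_{n\ge 0}V^{n}\mathcal{L}=\mathcal{H}_s$ and $\mathcal{K}_\infty=\mathcal{H}_\infty$.

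The routine part of the argument is the iterative orthogonal splitting and the shift identification. The main obstacle, and the step I would pay most attention to, is verifying that $\mathcal{H}_\infty$ is $V^{*}$\nobreakdash-invariant and that $V|_{\mathcal{H}_\infty}$ is surjective; these depend crucially on the isometry property ($V^{*}V=I$ and $V\mathcal{H}$ being closed), and are precisely the points that will require substantial modification once $V$ is only assumed left invertible rather than isometric.
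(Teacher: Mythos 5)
Your proposal is correct and follows essentially the same route as the paper: the paper states this theorem as Halmos's result and proves it in generalized form in its Theorems 1--2 (specialized to an isometry, where $T^-=T^*$), using exactly the same ingredients --- the wandering subspace $W=\mathcal{H}\ominus V\mathcal{H}$, the orthogonal sum $\bigoplus_{n\ge 0}V^nW$ versus $\bigcap_{n\ge 1}V^n\mathcal{H}$, and uniqueness by recovering $W$ from any competing decomposition. The only cosmetic difference is that the paper carries out your ``passage to the limit'' as strong convergence of the projections $P_n=V^nV^{*n}$ onto $V^n\mathcal{H}$, whereas you iterate the splitting $\mathcal{H}=\mathcal{L}\oplus V\mathcal{H}$ directly.
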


We recall that a subspace $\mathcal{H}_0$ of $\mathcal{H}$ reduce $V$ if $\mathcal{H}_0$ is invariant to $V$ and its adjoint.

In fact, the subspaces $\mathcal{H}_\infty$ and $\mathcal{H}_s$ are obtained in the following manner $$ \mathcal{H}_\infty=\bigcap\limits_{n=1}^\infty V^n\mathcal{H}$$
$$\mathcal{H}_s=\bigoplus\limits_{n=1}^{\infty}V^nW,$$ where $W:=\mathcal{H}\ominus V\mathcal{H}$ is the orthogonal complement of $V\mathcal{H}$ in $\mathcal{H}$. Here, $W$ is a wandering subspace for $V,$ that is $$V^mW\perp V^nW,\quad m\neq n.$$
See also \cite{Halmos2}, \cite{Shapiro}.\\
Theorem \ref{Wthm} has some remarkable consequences (see \cite{Bratelli}, \cite{Popoulis}) such as:
\begin{itemize}
\item the deduction of the Beurling's invariant subspace Theorem in Hardy spaces;
 \item  the description of the structure of a wide-sense stationary random sequence;
 \item  the description of the structure of isometric and unitary dilation spaces for contractions of a Hilbert space (see Nagy and Foia\c s \cite{Nagy}).
\end{itemize}

The above version of the Wold decomposition emphasizes spatial structure. An operator version of the Wold decomposition of an isometry is given by M. Rosenblum and J. Rovyak in their book \cite{Rosemblum}.

 But in Bergman and Dirichlet spaces the shift operator is no longer an isometry. Remarkable Wold type Theorem for classes of left invertible operators and applications to problems of invariant subspaces were obtained by S. Richter \cite{Richter} and S. Shimorin \cite{Shimorin}. In 1991, A. Aleman, S. Richter, C. Sundberg \cite{Aleman} proved the Beurling type theorem for Bergman shift, which was a big step in the study of invariant subspaces of the Bergman shift. This result became an important tool in the function theory of $L_a^2$ because it shows  the structure of invariant subspaces of the Bergman space.

This paper is motivated by a problem posed by S. Shimorin \cite{Shimorin}. The problem is to give new conditions for a left invertible operator to imply Wold type decompositions. The aim of this paper is to give conditions on the left invertible operators such that a operator version of the Wold decomposition can be proved. The left invertible operators (the operators bounded below) are the analysis operators from frame theory (see \cite{Christensen}).\\

We denote by $\mathcal{L}(\mathcal{H})$ the algebra of all linear bounded operators on the Hilbert space $\mathcal{H}$ and for $T\in\mathcal{L}{(\mathcal{H})}$, we denote by $T^*$ the adjoint operator of $T$. The following lemma is a well-known result.
\begin{lem} Let $T\in\mathcal{L}(\mathcal{H})$. The following are equivalent
\begin{enumerate}[($i$)]
\item $T$ is left invertible;
\item $T$ is bounded below, i.e. there exists a constant $m>0$ such that $$\|Th\|\geq m\|h\|,\quad h\in\mathcal{H};$$
\item $T^*$ is surjective;
\item $T^*T$ is invertible.
\end{enumerate}
\end{lem}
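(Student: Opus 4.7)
The plan is to prove the four conditions equivalent by closing the loop $(i) \Rightarrow (ii) \Rightarrow (iv) \Rightarrow (i)$ and establishing $(ii) \Leftrightarrow (iii)$ separately. The lemma is standard, so none of the steps should be hard; the only place where one must be slightly careful is invoking a completeness/open mapping argument at the right moment.

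For $(i) \Rightarrow (ii)$: if $ST = I$ for some bounded $S$, then $\|h\| = \|STh\| \le \|S\|\,\|Th\|$, so $m = \|S\|^{-1}$ works. For $(ii) \Rightarrow (iv)$: the identity $\langle T^*T h, h\rangle = \|Th\|^2 \ge m^2 \|h\|^2$ shows that the self-adjoint operator $T^*T$ satisfies $T^*T \ge m^2 I$, hence is invertible (e.g.\ by the spectral theorem or because it is bounded below and has dense range as a positive operator). For $(iv) \Rightarrow (i)$: the operator $(T^*T)^{-1}T^*$ is a bounded left inverse of $T$.

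For $(ii) \Leftrightarrow (iii)$: if $T$ is bounded below then $T$ is injective and its range is closed (a Cauchy sequence $(Tx_n)$ forces $(x_n)$ Cauchy by the lower bound). By the closed range theorem, $\mathrm{Ran}(T^*)$ is also closed, and moreover $\mathrm{Ran}(T^*) = \ker(T)^{\perp} = \mathcal{H}$, so $T^*$ is surjective. Conversely, if $T^*$ is surjective then $\mathrm{Ran}(T^*) = \mathcal{H}$ is closed, so $\mathrm{Ran}(T)$ is closed and $\ker(T) = \mathrm{Ran}(T^*)^{\perp} = \{0\}$; thus $T \colon \mathcal{H} \to \mathrm{Ran}(T)$ is a continuous bijection between Hilbert spaces, and the open mapping theorem gives a bounded inverse on $\mathrm{Ran}(T)$, which is exactly the bound $\|Th\| \ge m\|h\|$.

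I do not foresee any real obstacle; the only subtlety is remembering that the equivalence $(ii)\Leftrightarrow(iii)$ is not purely algebraic and requires either the closed range theorem or an explicit open mapping argument to turn the algebraic relation $\ker(T) = \mathrm{Ran}(T^*)^{\perp}$ into a quantitative lower bound.
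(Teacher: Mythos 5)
Your proof is correct: each implication is justified, and you rightly flag that $(ii)\Leftrightarrow(iii)$ is the only step needing the closed range theorem or an open mapping argument rather than pure algebra. The paper itself offers no proof to compare against --- it states the lemma as ``a well-known result'' and moves on --- and your argument is the standard one that the author is implicitly relying on.
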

If $T$ is left invertible, then $T\mathcal{H}$ is a closed subspace of $\mathcal{H}.$ As in \cite{Richter, Shimorin} we distinguish the following left-inverse of $T$ $$T^{-}=(T^*T)^{-1}T^*$$
and its kernel $$W=\mathcal{H}\ominus T\mathcal{H}=Ker{T}^*.$$
The subspace $W$ is called the defect of $T.$ It is clear that if $T$ is left invertible then $T^n$ is left invertible.

In the following, $\mathcal{D}$ denotes the set of all left invertible operators on $\mathcal{H}$ for which the following condition holds
\begin{equation}\label{conditiontonT-}
(T^n)^{-}=(T^{-})^n,\quad \textrm{for all}~ n\geq 2.
\end{equation}
\begin{rmk} If $T$ is an isometry, then $T^*T=I$, hence $T^{-}=T^*$ and (\ref{conditiontonT-}) holds. In fact, if $T$ is left invertible and  $T^{-}=T^*$, then $T$ is an isometry. Indeed, from the relation $(T^*T)^{-1}T^*=T^*$ it follows $$(T^*T)^{-1}T^*T=T^*T$$ hence $I=T^*T.$
\end{rmk}

In the following we give conditions for operators to be in the class $\mathcal{D}.$
\begin{prop} Let $Q\in\mathcal{L}(\mathcal{H})$ be left invertible and quasinormal. Then $Q\in\mathcal{D}.$
\end{prop}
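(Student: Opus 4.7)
The plan is to reduce the claim to algebraic manipulations with $A := Q^*Q$, exploiting that quasinormality means $Q$ commutes with $A$. Since $Q$ is left invertible, $A$ is invertible and self-adjoint, so $QA = AQ$ yields by adjunction $AQ^* = Q^*A$, and then $A^{-1}$ commutes with both $Q$ and $Q^*$. I would write $Q^{-} = A^{-1}Q^*$ and compute
\[
(Q^{-})^n = (A^{-1}Q^*)^n = A^{-n}(Q^*)^n,
\]
simply by sliding each $A^{-1}$ past each $Q^*$ using the commutation relation established above.

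The main step is to show that $(Q^n)^{*}Q^n = A^n$, that is, $(Q^*)^n Q^n = (Q^*Q)^n$. I would do this by induction on $n$. The base case $n=2$ uses quasinormality in its form $QQ^*Q = Q^*Q^2$:
\[
(Q^*Q)^2 = Q^*(QQ^*Q) = Q^*(Q^*Q^2) = (Q^*)^2 Q^2.
\]
For the inductive step, one rewrites $(Q^*Q)^{n+1} = (Q^*Q)^n \cdot Q^*Q = (Q^*)^n Q^n Q^* Q$ and then repeatedly uses $QQ^*Q = Q^*Q^2$ to migrate the inner $Q^*$ leftward through the block of $Q$'s, finally absorbing it into the $(Q^*)^n$ factor to produce $(Q^*)^{n+1}Q^{n+1}$.

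Once this identity is in hand, the formula $(Q^n)^{-} = ((Q^n)^*Q^n)^{-1}(Q^n)^* = A^{-n}(Q^*)^n$ is immediate, and comparing with the expression for $(Q^{-})^n$ above gives the desired equality $(Q^n)^{-} = (Q^{-})^n$ for every $n \geq 2$, so $Q \in \mathcal{D}$.

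The only real obstacle is the commutation identity $(Q^*)^n Q^n = (Q^*Q)^n$; the rest is bookkeeping. This identity is a standard consequence of quasinormality, and the inductive migration of $Q^*$ through a string of $Q$'s is exactly where the hypothesis is used in an essential way (normality would not be needed; quasinormality suffices because the relation $QQ^*Q = Q^*Q^2$ holds only when $Q^*$ is sandwiched between $Q$'s, which is precisely the situation that arises in the induction).
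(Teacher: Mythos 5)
Your proof is correct and follows essentially the same route as the paper: establish $(Q^*)^nQ^n=(Q^*Q)^n$ by induction from quasinormality, then compare $(Q^-)^n=A^{-n}(Q^*)^n$ with $(Q^n)^-=((Q^n)^*Q^n)^{-1}(Q^*)^n$. The only cosmetic difference is in the inductive step, where you migrate $Q^*$ rightward through $Q^n$ while the paper commutes $Q^*Q$ past $(Q^*)^n$; both are valid.
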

\begin{proof} Since $Q$ quasinormal, we have $(Q^*Q)Q=Q(Q^*Q).$

 We claim that this implies that ${Q^*}^nQ^n=(Q^*Q)^n,~\textrm{for all}~ n\geq 2$. We prove this by induction on $n$.

Indeed, for $n=2$ $${Q^*}^2Q^2=Q^*(Q^*Q)Q=(Q^*Q)(Q^*Q)$$
If ${Q^*}^nQ^n=(Q^*Q)^n$ then
\begin{align*}(Q^*Q)^{n+1}&=Q^*Q(Q^*Q)^n=(Q^*Q){Q^*}^nQ^n\\
                          &={Q^*}^n(Q^*Q)Q^n.\end{align*} We used the fact that $Q$ is quasinormal, hence $(Q^*Q)Q^*=Q^*(Q^*Q)$ and by induction, we have $(Q^*Q)Q^{*n}=Q^{*n}(Q^*Q).$\\
 \\
 So we get$(Q^*Q)^{n+1}={Q^*}^{n+1}Q^{n+1}.$\\
\\
It follows $$(Q^*Q)^{-n}=({Q^*}^nQ^n)^{-1}.$$
Hence
\begin{align*}
(Q^-)^n&=[(Q^*Q)^{-1}Q^*]^n={Q^*}^n(Q^*Q)^{-n}\\
       &={Q^*}^n({Q^*}^nQ^n)^{-1}=(Q^n)^{-}.
\end{align*}
\end{proof}

We recall that two operators $T_1$, $T_2$ are \textit{double commuting} if $T_1T_2$=$T_2T_1$ and $T_1T_2^*$=$T_2^*T_1$
\begin{prop} Let $T_1$, $T_2$ be  double commuting operators in $\mathcal{D}$. Then $$T_1T_2\in\mathcal{D}.$$
\end{prop}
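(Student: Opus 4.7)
The plan is to reduce everything to a clean formula $(T_1T_2)^{-}=T_1^{-}T_2^{-}$, which will then cascade to the $n$-th power identity. The double commuting hypothesis is strong enough that almost every pair of operators I need to move past each other actually commutes, so the main work is bookkeeping the commutations rather than finding a clever identity.

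First I would record the consequences of double commutativity. Taking adjoints of $T_1T_2^{*}=T_2^{*}T_1$ gives $T_2T_1^{*}=T_1^{*}T_2$, so the four operators $T_1,T_1^{*},T_2,T_2^{*}$ pairwise commute across the indices. In particular $T_1^{*}T_1$ commutes with both $T_2$ and $T_2^{*}$, and therefore with $T_2^{*}T_2$ and with $(T_2^{*}T_2)^{-1}$ (and symmetrically). This also shows $T_1T_2$ is left invertible: $(T_1T_2)^{*}(T_1T_2)=T_2^{*}T_1^{*}T_1T_2=(T_1^{*}T_1)(T_2^{*}T_2)$ is a product of two commuting invertible positive operators, hence invertible.

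Next I would compute $(T_1T_2)^{-}$ directly from the definition. Using the factorization above,
\begin{align*}
(T_1T_2)^{-} &= \bigl((T_1^{*}T_1)(T_2^{*}T_2)\bigr)^{-1}T_2^{*}T_1^{*}\\
             &= (T_1^{*}T_1)^{-1}(T_2^{*}T_2)^{-1}T_2^{*}T_1^{*}\\
             &= (T_1^{*}T_1)^{-1}T_1^{*}\,(T_2^{*}T_2)^{-1}T_2^{*}\\
             &= T_1^{-}T_2^{-},
\end{align*}
where the third equality slides $T_1^{*}$ past $(T_2^{*}T_2)^{-1}$ using the commutation noted above. The same argument, run with $T_1^{n}$ and $T_2^{n}$ in place of $T_1$ and $T_2$ (they are still double commuting, since each commutation propagates to powers), gives $((T_1T_2)^{n})^{-}=(T_1^{n}T_2^{n})^{-}=(T_1^{n})^{-}(T_2^{n})^{-}$, where I used $(T_1T_2)^{n}=T_1^{n}T_2^{n}$ from ordinary commutativity.

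Finally I would assemble the pieces. Using $T_1,T_2\in\mathcal{D}$, the right-hand side becomes $(T_1^{-})^{n}(T_2^{-})^{n}$. An identical commutation computation shows $T_1^{-}T_2^{-}=T_2^{-}T_1^{-}$, so $(T_1^{-})^{n}(T_2^{-})^{n}=(T_1^{-}T_2^{-})^{n}=((T_1T_2)^{-})^{n}$, which is the desired identity (\ref{conditiontonT-}) for $T_1T_2$. I expect the only place a reader might want care is in verifying that $T_1^{n}$ and $T_2^{n}$ remain double commuting and that the factor $(T_2^{*}T_2)^{-1}$ really commutes with $T_1^{*}$; both follow mechanically from the pairwise commutations listed in the first step, so there is no substantive obstacle.
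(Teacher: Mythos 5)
Your proposal is correct and follows essentially the same route as the paper: establish left invertibility of $T_1T_2$, prove the key identity $(T_1T_2)^{-}=T_1^{-}T_2^{-}$ by commuting $(T_1^{*}T_1)^{-1}$ past the $T_2$-factors, and then cascade through powers using $T_1,T_2\in\mathcal{D}$ and the fact that $T_1^{n}$, $T_2^{n}$ remain double commuting. Your write-up is if anything slightly more explicit than the paper's (e.g., in verifying $T_1^{-}T_2^{-}=T_2^{-}T_1^{-}$), but there is no substantive difference.
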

\begin{proof} It is clear that $T_1T_2$ is left invertible since $$T_2^-T_1^-T_1T_2=T_2^-T_2=I.$$
From hypothesis, $$T_2(T_1^*T_1)=(T_1^*T_1)T_2$$
hence $$(T_1^*T_1)^{-1}T_2=T_2(T_1^*T_1)^{-1}.$$
It follows that
\begin{align*}
T_1T_2&=[(T_1T_2)^*(T_1T_2)]^{-1}(T_1T_2)^*\\
        &=(T_2^*T_2T_1^*T_1)^{-1}T_2^*T_1^*\\
        &=(T_1^*T_1)^{-1}(T_2^*T_2)^{-1}T_1^*T_2^*\\
        &=(T_1^*T_1)^{-1}T_1^*(T_2^*T_2)^{-1}T_2^*\\
        &=T_1^{-}T_2^{-}.
\end{align*}
From here we get
\begin{align*}
[(T_1T_2)^-]^n&=(T_1^-T_2^-)^n=(T_1^-)^n(T_2^-)^n\\
              &=(T_1^n)^-(T_2^n)^-=(T_1^nT_2^n)^-=[(T_1T_2)^n]^-,
\end{align*}
since $T_1^n, T_2^n\in\mathcal{D}$ and $T_1^n$ and $T_2^n$ are double commuting.
\end{proof}
\begin{cor} Let be $T_1\in\mathcal{D}$ and $T_2$ be normal and invertible, and $T_1T_2=T_2T_1.$ Then $T_1T_2\in\mathcal{D}.$
\end{cor}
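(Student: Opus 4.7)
The strategy is to reduce this to Proposition 2 about double commuting operators. To apply that proposition I need two ingredients: first, that $T_2 \in \mathcal{D}$, and second, that $T_1$ and $T_2$ are double commuting.

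For the first ingredient, since $T_2$ is invertible, its adjoint $T_2^*$ is invertible as well, so $T_2^*T_2$ is invertible and
\[
T_2^{-} = (T_2^*T_2)^{-1}T_2^* = T_2^{-1}(T_2^*)^{-1}T_2^* = T_2^{-1}.
\]
The same computation applied to the (invertible) power $T_2^n$ yields $(T_2^n)^{-} = T_2^{-n} = (T_2^{-1})^n = (T_2^{-})^n$, so condition (\ref{conditiontonT-}) is satisfied and $T_2 \in \mathcal{D}$. Note that normality is not needed here, only invertibility.

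For the second ingredient, the commutation $T_1T_2 = T_2T_1$ is given. The relation $T_1T_2^* = T_2^*T_1$ does not follow formally, but it is precisely the content of \emph{Fuglede's theorem}: if $N$ is a normal bounded operator on a Hilbert space and $S$ commutes with $N$, then $S$ commutes with $N^*$. Taking $N = T_2$ (normal by hypothesis) and $S = T_1$ gives exactly $T_1T_2^* = T_2^*T_1$. Thus $T_1$ and $T_2$ are double commuting.

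With both ingredients in hand, Proposition 2 applies directly and yields $T_1T_2 \in \mathcal{D}$. The only nontrivial step is the appeal to Fuglede's theorem; the rest is bookkeeping. An alternative would be to avoid Fuglede and instead invoke the spectral theorem for $T_2$, noting that $T_1$ commutes with every spectral projection of $T_2$ and hence with $T_2^*$, but this is essentially the same argument repackaged.
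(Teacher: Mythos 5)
Your proof is correct and follows essentially the same route as the paper: establish $T_2\in\mathcal{D}$ from invertibility, upgrade commutation to double commutation via Fuglede's theorem, and invoke Proposition 2. The paper merely asserts the first step, whereas you spell out the computation $T_2^{-}=T_2^{-1}$; otherwise the arguments coincide.
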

\begin{proof} From the hypothesis it follows that  $T_2\in\mathcal{D}$. From the Fuglede-Putnam theorem \cite{Halmos2} it follows $T_1^*T_2=T_2T_1^*$. The conclusion now is a consequence of Proposition 2.  \end{proof}
\begin{rmk} From the above corollary, we also obtain that each quasinormal and left invertible operator is in $\mathcal{D}.$
\end{rmk}
Indeed, $T$ admits polar decomposition $T=VA$, with $V$ isometry, $A=(T^*T)^{1/2}$ and $VA=AV$ (see \cite{Furuta}).\\

Next, we give an example of quasinormal, left invertible operator that is no an isometry.\\

\ex Let ${\cal K}$ be a Hilbert space of dimension at least 2 and $$l^2(\mathcal{K})=\{\widetilde{k}=(k_0,k_1,\ldots,k_n,\ldots)|k_j\in\mathcal{K},j=0,1,\ldots~\textrm{and}~ \|\widetilde{k}\|_2^2:=\sum_{j=0}^{\infty}\|k_j\|^2<\infty\}.$$
Let $L\in\mathcal{L}(\mathcal{K})$ be a positive invertible operator such that $\|Lk\|\geq m\|k\|,$ for all $k$ and some $m>1.$ We define the following operator on $l^2(\mathcal{K})$:
$$T\widetilde{k}:=(0,Lk_0,Lk_1,\ldots)$$
Note that $T$ is bounded on $l^2(\mathcal{K})$ and is not surjective;
$$(T^*T\widetilde{k})_n=L^2k_n$$
$$[T(T^*T)\widetilde{k}]=(0, L^3k_0,\ldots,L^3k_n,\ldots)$$
$$(T^*T)T\widetilde{k}=T^*T(0,Lk_0,\ldots,Lk_n,\ldots)=(0,L^3k_0,\ldots, L^3k_n,\ldots)$$
It follows that $T$ is quasinormal.\\
We have $\|T\widetilde{k}\|_2^2=\displaystyle \sum \|Lk_n\|^2\geq m^2\sum\|k_n\|^2.$\\
This implies that $T$ is not an isometry, since $\|T\widetilde{k}\|_2^2\neq\sum\|k_n\|^2.$

We give conditions for the weighted shifts \cite{Shields} and weighted translation operators \cite{Embry} to be in $\mathcal{D}.$
\begin{prop}Every bounded left invertible unilateral weighted shift on $l^2$ is in the class $\mathcal{D}.$
\end{prop}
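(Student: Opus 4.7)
The plan is to verify condition (\ref{conditiontonT-}) by direct computation in the canonical orthonormal basis $\{e_n\}_{n\geq 0}$ of $l^2$. Let $T$ be a bounded weighted shift with weights $\{w_n\}_{n\geq 0}$, acting by $Te_n = w_n e_{n+1}$. The left invertibility hypothesis is equivalent to the existence of $m>0$ with $|w_n|\geq m$ for every $n$, which in particular keeps $T^*T$ boundedly invertible. I would then unwind the definition of $T^-$ on basis vectors and iterate, comparing the result with the analogous formula for $(T^n)^-$.

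First I would assemble the basic ingredients. Since $T^*e_0 = 0$ and $T^*e_{n+1} = \overline{w_n}\,e_n$, the operator $T^*T$ is diagonal with $(T^*T)e_n = |w_n|^2 e_n$, hence $(T^*T)^{-1}e_n = |w_n|^{-2}e_n$. Combining these, the distinguished left inverse $T^- = (T^*T)^{-1}T^*$ satisfies $T^-e_0 = 0$ and $T^-e_n = w_{n-1}^{-1}\,e_{n-1}$ for $n\geq 1$, using the identity $\overline{w}/|w|^2 = 1/w$ whenever $w\neq 0$. Thus $T^-$ is a weighted backward shift with weights $1/w_{n-1}$. By a trivial induction one then obtains $(T^-)^n e_k = 0$ for $k<n$ and $(T^-)^n e_k = (w_{k-n}\cdots w_{k-1})^{-1}\,e_{k-n}$ for $k\geq n$.

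On the other hand, $T^n$ is itself a bounded, left invertible weighted shift, with $T^n e_k = (w_k w_{k+1}\cdots w_{k+n-1})\,e_{k+n}$. Applying exactly the same computation to $T^n$ in place of $T$, one finds $(T^n)^*T^n e_k = |w_k|^2\cdots|w_{k+n-1}|^2 e_k$ and $(T^n)^* e_k = \overline{w_{k-n}}\cdots\overline{w_{k-1}}\,e_{k-n}$ (for $k\geq n$, and $0$ otherwise); multiplying gives $(T^n)^- e_k = (w_{k-n}\cdots w_{k-1})^{-1}\,e_{k-n}$ for $k\geq n$, and $0$ otherwise. The two operators $(T^n)^-$ and $(T^-)^n$ agree on every basis vector and hence, by linearity and boundedness, on all of $l^2$, which is condition (\ref{conditiontonT-}).

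There is no real obstacle beyond careful bookkeeping of weight indices and the conjugates cancelling through the identity $\overline{w}/|w|^2 = 1/w$; the diagonal nature of $T^*T$ is what makes the inversion and the iteration transparent. It is worth noting that general weighted shifts are not quasinormal (quasinormality forces $|w_n|$ to be constant), so this proposition genuinely goes beyond Proposition~1.
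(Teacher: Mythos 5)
Your proof is correct and follows essentially the same route as the paper: a direct computation of $T^-$, $(T^-)^n$ and $(T^n)^-$ on the basis vectors $e_k$, using the diagonality of $T^*T$ and the cancellation $\overline{w}/|w|^2=1/w$ (the paper phrases the comparison via the adjoints $(T^-)^*$ and $[(T^n)^-]^*$, but this is the same bookkeeping). No substantive difference.
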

\begin{proof} Let $T$ be a unilateral weighted shift, which is bounded below; that is, $Te_k=w_ke_{k+1},\quad k\geq 0$ and $C_1\leq w_n\leq C_2$, where $C_1, C_2$ are positive constants. We have:\\
$$T^*e_k =
\begin{cases}
\overline{w}_{k-1}e_{k-1}, \quad k\geq 1 \\
0, \quad k=0;
\end{cases}
$$
$$T^ne_k=w_kw_{k+1}\cdots w_{k+n-1}e_{k+n},~\textrm{and}$$
$${T^*}^ne_k =
\begin{cases}
\overline{w}_{k-1}\overline{w}_{k-2}\cdots\overline{w}_{k-n}e_{k-n}, \quad k\geq n \\
0, \quad 0\leq k<n.
\end{cases}
$$
It follows \begin{align*}
(T^-)^*e_k&=T(T^*T)^{-1}e_k=T\bigg(\dfrac{1}{|w_k|^2}e_k\bigg)\\
&=\dfrac{1}{|w_k|^2}w_ke_{k+1}\\
&=\dfrac{1}{\overline{w}_k}e_{k+1}
\end{align*}
and $${(T^-)^*}^ne_k=\dfrac{1}{\overline{w}_k\overline{w}_{k+1}\cdots\overline{w}_{k+n-1}}e_{k+n}.$$
On the other hand, $${T^*}^nT^ne_k=w_kw_{k+1}\cdots w_{k+n-1}\overline{w}_{k+n-1}\cdots\overline{w}_ke_k$$
which implies \begin{align*}
[(T^n)^-]^*&=T^n\bigg(\frac{1}{|w_k|^2\cdots|w_{k+n-1}|^2}\bigg)e_k\\
           &=\frac{1}{\overline{w}_k\cdots\overline{w}_{k+n-1}}e_{k+n}
\end{align*}
It follows $(T^-)^n=(T^n)^-,~\textrm{for all}~n\geq 2.$
\end{proof}

From the above Proposition, it follows that the Bergman shifts, i.e. the shifts with sequence weights $\bigg\{\sqrt{\dfrac{k+1}{k+2}}\bigg\}_{k\in\mathbb{N}}$ and also, the Dirichlet shifts, i.e. the shifts with sequence weights $\bigg\{\sqrt{\dfrac{k+2}{k+1}}\bigg\}_{k\in\mathbb{N}}$ are in the class of $\mathcal{D}.$
\begin{prop}Every left invertible weighted translation operator on $L^2(0,\infty)$ is in $\mathcal{D}.$
\end{prop}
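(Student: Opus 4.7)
The plan is to mimic the proof of Proposition 3 in the continuous setting, with the discrete orthonormal basis $\{e_k\}$ of $\ell^2$ replaced by the multiplication/translation calculus on $L^2(0,\infty)$. Recall that a bounded weighted translation operator has the form
\[
(Tf)(x)=\phi(x)\,f(x-a)\quad (x\geq a),\qquad (Tf)(x)=0\quad (0\leq x<a),
\]
for some fixed $a>0$ and an essentially bounded measurable symbol $\phi$. Left invertibility of $T$ is equivalent to $|\phi|$ being essentially bounded below by some $m>0$ on $[a,\infty)$.

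First I would read off the adjoint by a change of variables, obtaining
\[
(T^*f)(x)=\overline{\phi(x+a)}\,f(x+a),\qquad (T^*Tf)(x)=|\phi(x+a)|^2\,f(x).
\]
Thus $T^*T$ is a multiplication operator whose symbol is bounded away from zero, hence boundedly invertible. Iterating the definition of $T$ gives the closed form
\[
(T^n f)(x)=\Bigl(\prod_{j=0}^{n-1}\phi(x-ja)\Bigr)f(x-na)\quad (x\geq na),
\]
and correspondingly $(T^n)^*T^n$ is multiplication by the bounded-below function $\prod_{j=1}^{n}|\phi(x+ja)|^2$.

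Next I would evaluate both sides of (\ref{conditiontonT-}) directly. From $T^-=(T^*T)^{-1}T^*$ one reads $(T^- f)(x)=\frac{\overline{\phi(x+a)}}{|\phi(x+a)|^2}\,f(x+a)$, and an easy induction then yields
\[
((T^-)^n f)(x) = \prod_{j=1}^{n} \frac{\overline{\phi(x+ja)}}{|\phi(x+ja)|^2}\,f(x+na).
\]
On the other hand, plugging the closed forms of $(T^n)^*$ and $\bigl((T^n)^*T^n\bigr)^{-1}$ into $(T^n)^-=\bigl((T^n)^*T^n\bigr)^{-1}(T^n)^*$ produces exactly the same expression. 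Comparing the two establishes $(T^-)^n=(T^n)^-$ for every $n\geq 2$, so $T\in\mathcal{D}$.

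I do not anticipate any substantive obstacle: the entire argument is essentially bookkeeping on the shifted arguments $x\pm ja$. The two points meriting care are that left invertibility is exactly what ensures the multiplication symbol $|\phi(x+a)|^2$ is bounded away from zero, so that every reciprocal above is a legitimate $L^\infty$ function, and that the indices in the two product formulas must be tracked carefully to avoid off-by-one discrepancies.
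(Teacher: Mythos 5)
Your proof is correct and follows essentially the same route as the paper: compute closed-form expressions for $T^n$, $(T^n)^*T^n$, $(T^-)^n$ and $(T^n)^-$ as weighted translations and observe they coincide. The only cosmetic difference is that you parametrize the weight as a general bounded-below symbol $\phi$ (tracking complex conjugates), whereas the paper writes it in the Embry--Lambert quotient form $\varphi(x)/\varphi(x-t)$, which makes all the products telescope; the two parametrizations are interchangeable.
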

\begin{proof} Let $T$ be a weighted translation operator, i.e.
$$Tf(x)=
\begin{cases}
\dfrac{\varphi(x)}{\varphi(x-t)}f(x-t),~\textrm{if}~x>t \\
0,~\textrm{if}~0<x\leq t
\end{cases}
$$
 Further we have
$$T^*f(x)=\frac{\varphi(x+t)}{\varphi(x)}f(x+t)$$
$$T^nf(x)=\frac{\varphi(x)}{\varphi(x-nt)}f(x-nt),~\textrm{for}~ x>nt$$ and
$${T^*}^nf(x)=\frac{\varphi(x+nt)}{\varphi(x)}f(x+nt).$$ It follows
that $${T^*}^nT^nf(x)=\bigg[\frac{\varphi(x+nt)}{\varphi(x)}\bigg]^2f(x),$$
$$T^*Tf(x)=\bigg[\frac{\varphi(x+t)}{\varphi(x)}\bigg]^2f(x),$$
%$$TT^*f(x)=\bigg[\frac{\varphi(x)}{\varphi(x-t)}\bigg]^2f(x)$$
$$T^-f(x)=\frac{\varphi(x)}{\varphi(x+t)}f(x+t),$$
$$(T^-)^nf(x)=\frac{\varphi(x)}{\varphi(x+nt)}f(x+nt),~\textrm{and}$$
$$(T^n)^-f(x)=\frac{\varphi(x)}{\varphi(x+nt)}f(x+nt).$$\end{proof}\\
%We recall here the following notation $$T^-=(T^*T)^{-1}T^*.$$
Next, we give the main result of this paper. We recall here the following notation $T^-=(T^*T)^{-1}T^*.$
\begin{thm}Let $T\in\mathcal{L}(\mathcal{H})$ be in $\mathcal{D}.$ Then
\begin{enumerate}[($i)$]
\item $P_0:=I-TT^-$ is the projection of $\mathcal{H}$ on $W:=\mathcal{H}\ominus T\mathcal{H}$;
\item as $n\rightarrow\infty,$ $T^n(T^-)^n$ converges strongly to the projection operator, P, on $$
\bigcap_{n=1}^{\infty}T^n\mathcal{H};$$
\item $\displaystyle\sum_{j=0}^{\infty} T^jP_0(T^-)^j$ converges strongly to $Q:=I-P;$\\
\item $Q\mathcal{H}=\{h\in\mathcal{H}: \displaystyle \lim_{n\rightarrow\infty}\|({T^*}^nT^n)^{-1/2}{T^*}^n\|=0 \}$;\\
\item $P\mathcal{H}$ and $Q\mathcal{H}$ reduce $T$;
\item $T|_{P\mathcal{H}}$ is surjective;
\item $\displaystyle I=P+\sum_{j=0}^{\infty}T^jP_0(T^-)^j$
\end{enumerate}
\end{thm}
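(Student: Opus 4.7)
The plan is to reduce every item to three ingredients: the projection identity $TT^- = $ orthogonal projection onto $T\mathcal{H}$, the left inverse relation $T^-T = I$, and the defining property $(T^n)^- = (T^-)^n$ of the class $\mathcal{D}$. I would proceed in the order the items are listed.

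For (i), I verify that $TT^- = T(T^*T)^{-1}T^*$ is selfadjoint and idempotent directly from the definition of $T^-$, and observe $TT^-\cdot T = T$ to identify the range as $T\mathcal{H}$, so that $P_0 = I - TT^-$ is the projection onto $W$. Applying this same observation to $T^n$ and invoking the $\mathcal{D}$-hypothesis, we recognize $P_n := T^n(T^-)^n = T^n(T^n)^-$ as the orthogonal projection onto $T^n\mathcal{H}$. Since $T^{n+1}\mathcal{H}\subseteq T^n\mathcal{H}$, the sequence $\{P_n\}$ is decreasing, hence strongly convergent to the projection $P$ onto $\bigcap_{n\geq 1}T^n\mathcal{H}$; this is (ii). Items (iii) and (vii) then follow from the telescoping identity
\[
T^j P_0 (T^-)^j = T^j(T^-)^j - T^{j+1}(T^-)^{j+1},
\]
obtained from $T^-T = I$; the partial sum through $j=n-1$ equals $I - P_n$, which converges strongly to $I - P = Q$.

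For (iv), I would expand
\[
\bigl\|((T^*)^n T^n)^{-1/2}(T^*)^n h\bigr\|^2 = \bigl\langle T^n ((T^*)^n T^n)^{-1}(T^*)^n h, h\bigr\rangle = \langle P_n h, h\rangle,
\]
where the second equality uses $(T^n)^- = (T^-)^n$ once more to recognize $T^n((T^*)^nT^n)^{-1}(T^*)^n = P_n$; passing to the limit with (ii) gives $\|Ph\|^2$, which vanishes exactly when $h\in Q\mathcal{H}$. For (v), the key identity $TP_n = P_{n+1}T$ (immediate from $T^-T = I$) passes to the strong limit to give $TP = PT$; combined with $P = P^*$ this forces both $P\mathcal{H}$ and $Q\mathcal{H}$ to be invariant under $T$ and $T^*$, i.e.\ to reduce $T$. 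For (vi), given $h\in P\mathcal{H}$ I set $k := T^-h$; writing $h = T^n k_n$ for every $n\geq 1$ shows $k = T^-h = T^{n-1}k_n \in T^{n-1}\mathcal{H}$, so $k\in P\mathcal{H}$, and $Tk = TT^-h = h$ because $TT^-$ fixes every element of $T\mathcal{H}$.

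The only genuine subtlety is that every step after (i) leans on the $\mathcal{D}$-condition $(T^n)^- = (T^-)^n$: without it the operators $T^n(T^-)^n$ are not projections, the monotone convergence argument in (ii) collapses, and the telescoping in (iii) no longer cleanly yields $I - P_n$. Once (ii) is secured, everything else is routine bookkeeping with monotone convergence of projections and the identity $T^-T = I$.
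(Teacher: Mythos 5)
Your proposal is correct and follows essentially the same route as the paper: identify $P_n := T^n(T^-)^n = T^n(T^n)^-$ as the orthogonal projection onto $T^n\mathcal{H}$ via the $\mathcal{D}$-condition, telescope $T^jP_0(T^-)^j = P_j - P_{j+1}$ to get (iii) and (vii), and run the same computations for (iv), (v) and (vi). The only cosmetic difference is in (ii), where you invoke the standard strong convergence of a decreasing sequence of orthogonal projections to the projection onto the intersection of their ranges, whereas the paper proves that convergence by hand, using the mutual orthogonality of the ranges of $P_n - P_{n+1}$ and a Cauchy estimate on the series $\sum_n\|P_nh-P_{n+1}h\|^2$.
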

\begin{proof}Let $P_n=T^n(T^-)^n,$ $n\geq 1$. We prove that $P_n$ is the orthogonal projection of $\mathcal{H}$ on $T^n\mathcal{H},$ $n\geq 1.$ Indeed,
$$P_1^2=T(T^-T)T^-=TT^-=P_1$$
$$P_1^*=(T^-)^*T^*=T(T^*T)^{-1}T^*=TT^-=P_1.$$
Hence $P_1$ is the orthogonal projection of $\mathcal{H}$ on $P_1\mathcal{H}=TT^-\mathcal{H}=T\mathcal{H}$ since $T^-$ is surjective. For $n\geq 2,$ $T^n$ is also left invertible.

From the above result it follows that $P_n=T^n(T^n)^-=T^n(T^-)^n$ is the orthogonal projection of $\mathcal{H}$
on $T^n\mathcal{H}.$

It is clear that $P_0:I-TT^-$ is the orthogonal projection of $\mathcal{H}$ on $\mathcal{H}\ominus T\mathcal{H}.$
We prove that $P_nh\rightarrow Ph,~\textrm{for all}~h\in\mathcal{H}$ and $T^m W\perp T^nW,\quad m\neq n.$\\
It is clear that $P_n-P_{n+1}$ is the orthogonal projection of $\mathcal{H}$ on $T^n\mathcal{H}\bigcap(T^{n+1}\mathcal{H})^{\perp}.$

It follows $$(P_n-P_{n+1})\mathcal{H}\perp(P_m-P_{m+1})\mathcal{H}, m\neq n.$$
Hence
\begin{align*}\sum_{n=0}^m\|P_nh-P_{n+1}h\|^2&=\|\sum_{n=0}^m(P_nh-P_{n+1}h)\|^2\\
                                             &=\|h-P_{m+1}h\|^2\leq 4\|h\|^2
\end{align*}
It follows that $\displaystyle \sum_{n=0}^{\infty}\|P_nh-P_{n+1}h\|^2$ converges, i.e. for every  $\varepsilon>0$, there exists an  $N(\varepsilon)$ such that for $n\geq N(\varepsilon),$ we have:
\begin{align*}\|P_n h-P_{n+1}h\|^2&+\|P_{n+1}h-P_{n+2}h\|^2+\ldots+\|P_{n+p-1}h-P_{n+p}h\|^2<
\varepsilon\\
&\Longleftrightarrow\|P_nh-P_{n+p}h\|^2<\varepsilon,~\textrm{for all}~p\in\mathbb{N}, p\geq 1.
\end{align*}
So $(P_nh)$ converges to an element in $\mathcal{H}.$\\

We denote $\displaystyle Ph=\lim_{n\rightarrow\infty}P_nh.$ We prove that $P$ is the orthogonal projection of $\displaystyle\mathcal{H}$ on $\displaystyle\bigcap_{n=1}^{\infty} T^n\mathcal{H}$.
\begin{enumerate}[$\noindent$]
\item Indeed, we consider $\displaystyle h\in\bigcap_{n=1}^{\infty} T^n\mathcal{H}$. Then $h\in T^n\mathcal{H},~\textrm{for all}~n\geq 1$ and $P_n h=h,$ for all $n\geq 1$. Hence $Ph=h.$
\item On the other hand, if we take $\displaystyle h\perp\bigcap_{n=1}^{\infty} T^n\mathcal{H}$. Notice that, by the definition of $P$, it follows that $\displaystyle Ph\in\bigcap_{n=1}^{\infty} T^n\mathcal{H}.$ Then $$P(Ph)=\lim_{n\rightarrow\infty} P_n(Ph)=\lim_{n\rightarrow\infty} Ph=Ph$$
    and hence $$\|Ph\|^2=\langle Ph,Ph\rangle=\langle P^2h,h\rangle=\langle Ph,h\rangle=0.$$ Thus $Ph=0.$\end{enumerate}
We have \begin{align*}
P_n-P_{n+1}&=T^n(T^-)^n-T^{n+1}(T^-)^{n+1}\\
           &=T^n(I-TT^-)(T^-)^n\\
           &=T^nP_0(T^-)^n
\end{align*}
and $$(I-P_1)+(P_1-P_2)+\ldots+(P_n-P_{n+1})=I-P_{n+1}.$$
Hence $\displaystyle \sum_{n=0}^{\infty}T^nP_0(T^-)^{n}$ converges to $I-P=Q.$\\
\\
To prove $(iv)$, we observe that
$$h\in Q\mathcal{H}\Longleftrightarrow Ph=0\Longleftrightarrow\displaystyle \lim_{n\rightarrow\infty}\|T^n(T^-)^nh\|=0.$$
The last equality is equivalent with $$\lim_{n\rightarrow\infty}\langle T^n({T^*}^nT^n)^{-1}{T^*}^nh,T^n({T^*}^n T^n)^{-1}{T^*}^{n}h\rangle=0$$ i.e. $\displaystyle \lim_{n\rightarrow\infty}\|({T^*}^nT^n)^{-1/2}{T^*}^nh\|=0.$\\
\\
To prove that $\mathcal{H}_{\infty}$ reduces $T$ we note that
\begin{align*}
P_{n+1}T&=T^{n+1}(T^-)^{n+1}T=T^{n+1}(T^-)^n(T^-T)=T^{n+1}(T^-)^n\\
TP_n&=TT^n(T^-)^n=T^{n+1}(T^-)^n
\end{align*}
Hence $P_{n+1} T=TP_n\Rightarrow PT=TP$ hence $\mathcal{H}_{\infty}$ reduces $T.$\\
\\
Next we prove now that $T|_{\mathcal{H}_{\infty}}$ is surjective.\\
\\
Let $h_0\in\mathcal{H}_{\infty}$. It follows $h_0\in T^n\mathcal{H},~\textrm{for all}~n\geq 1.$\\
\\ For any $n\geq 1$ there exists $h_n\in\mathcal{H}$ so that $h_0=T^nh_n.$ Then $$h_0=Th_n', h_n'\in T^{n-1}\mathcal{H}, n\geq 1\Rightarrow T^-h_0=h_n', n\geq 1\Rightarrow h_n'=h_1',n\geq 1.$$
Hence $$h_0=Th_1', h_1'\in T^{n-1}\mathcal{H}, n\geq 1\Rightarrow h_1'\in\displaystyle \bigcap_{n\geq 1} T^{n-1}\mathcal{H}=\mathcal{H}_{\infty}.$$
\end{proof}
\begin{thm} Let $T\in\mathcal{D}.$ Then $W:=\mathcal{H}\ominus T\mathcal{H}$ is a wandering subspace of $\mathcal{H}$ and $$\mathcal{H}=\mathcal{H}_{\infty}\oplus\mathcal{H}_s,$$ where $$\mathcal{H}_{\infty}=\bigcap_{n=1}^{\infty}T^n\mathcal{H},\quad\mathcal{H}_s=\bigoplus_{n=0}^{\infty}T^nW.$$
$\mathcal{H}_{\infty}$ and $\mathcal{H}_s$ are reducing spaces of $T$ and $T|_{\mathcal{H}_{\infty}}$ is bijective. The decomposition is unique.
\end{thm}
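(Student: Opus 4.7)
The plan is to build this theorem almost entirely on top of the previous one; most of its content is already encoded in the operator identities $P_0 = I - TT^{-}$, $P_n = T^n(T^{-})^n \to P$, and $I = P + \sum_{j\geq 0} T^j P_0 (T^{-})^j$ established there.

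First I would show that $W$ is wandering. Writing $P_n - P_{n+1} = T^n(I - TT^{-})(T^{-})^n = T^n P_0 (T^{-})^n$ and using $(T^{-})^n T^n = I$ (which holds because $T^{-}T = I$), I would identify the range of $P_n - P_{n+1}$ with $T^n W$: the inclusion $\subseteq$ follows from $P_0 \mathcal{H} = W$, and for the reverse inclusion one computes $(P_n - P_{n+1}) T^n w = T^n P_0 (T^{-})^n T^n w = T^n w$ for every $w\in W$. Since $T^{n+1}\mathcal{H} \subseteq T^n\mathcal{H}$, the projections $P_n$ satisfy $P_m P_n = P_{\max(m,n)}$, so the differences $P_n - P_{n+1}$ are mutually orthogonal projections; this forces $T^m W \perp T^n W$ for $m\neq n$, which is the wandering property.

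Next I would assemble the direct-sum decomposition. Statement $(vii)$ of the previous theorem gives $h = Ph + \sum_{j=0}^{\infty} T^j P_0 (T^{-})^j h$ for every $h\in\mathcal{H}$, with $Ph\in \mathcal{H}_\infty$ and each summand in $T^j W$, so $\mathcal{H} = \mathcal{H}_\infty + \mathcal{H}_s$. For orthogonality I would use the identification $T^j W = T^j \mathcal{H} \ominus T^{j+1}\mathcal{H}$ from the first step together with $\mathcal{H}_\infty \subseteq T^{j+1}\mathcal{H}$ to conclude $T^j W \perp \mathcal{H}_\infty$, hence $\mathcal{H}_s \perp \mathcal{H}_\infty$. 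The reducing properties come directly: $PT = TP$ is already proved, so $\mathcal{H}_\infty = P\mathcal{H}$ reduces $T$, and its orthogonal complement $\mathcal{H}_s$ reduces $T$ automatically. Bijectivity of $T|_{\mathcal{H}_\infty}$ combines the surjectivity in part $(vi)$ of the previous theorem with the fact that $T$ is injective on all of $\mathcal{H}$ by left invertibility.

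For uniqueness, I would suppose $\mathcal{H} = \mathcal{N}_1 \oplus \mathcal{N}_2$ is another decomposition in which both summands reduce $T$, $T|_{\mathcal{N}_1}$ is bijective, and $\mathcal{N}_2 = \bigoplus_{n\geq 0} T^n W_0$ for some wandering subspace $W_0 \subseteq \mathcal{N}_2$. Since $T\mathcal{N}_1 = \mathcal{N}_1$, taking images gives $T\mathcal{H} = \mathcal{N}_1 \oplus \bigoplus_{n\geq 1} T^n W_0$, which forces $W_0 = \mathcal{H}\ominus T\mathcal{H} = W$. Iterating the same argument, $T^n\mathcal{H} = \mathcal{N}_1 \oplus \bigoplus_{k\geq n} T^k W$, and intersecting over $n$ collapses the shift part, yielding $\mathcal{N}_1 = \bigcap_n T^n\mathcal{H} = \mathcal{H}_\infty$ and hence $\mathcal{N}_2 = \mathcal{H}_s$. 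The only step that requires real care is the exact computation of the range of $P_n - P_{n+1}$ as $T^n W$; once that is in hand, everything else is bookkeeping on top of the previous theorem.
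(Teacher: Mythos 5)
Your proposal is correct and takes essentially the same route as the paper: existence is read off from the operator identities of the preceding theorem (with the wandering property coming from the mutually orthogonal projections $P_n-P_{n+1}=T^nP_0(T^-)^n$ whose ranges are $T^nW$), and uniqueness is obtained by forcing the competing wandering subspace to equal $\mathcal{H}\ominus T\mathcal{H}$, using that $T$ carries an orthogonal sum of reducing subspaces to an orthogonal sum. The paper concludes directly from $W_0=W$ (so $\mathcal{N}_2=\bigoplus_n T^nW=\mathcal{H}_s$ and $\mathcal{N}_1=\mathcal{H}_s^{\perp}=\mathcal{H}_{\infty}$) without your iteration over $n$, but that is only a cosmetic difference.
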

\begin{proof}
The fact that the decomposition exists follows from Theorem 1. We prove that the decomposition is unique. Let $\mathcal{H}=\mathcal{H}_{\infty}'\bigoplus\mathcal{H}_s'$ a decomposition such that $$T\mathcal{H}_{\infty}'=\mathcal{H}_{\infty}'$$
$$\mathcal{H}_s'=\bigoplus_{n=0}^{\infty}T^nW',$$ where $W'$ is a wandering subspace of $T$.
We prove that $\mathcal{H}_{\infty}'=\mathcal{H}_{\infty}$ and $\mathcal{H}_s'=\mathcal{H}_s.$ Indeed,
\begin{align*}
W&=\mathcal{H}\ominus T\mathcal{H}=(\mathcal{H}_{\infty}'\bigoplus\mathcal{H}_s')\ominus(T\mathcal{H}_{\infty}'\bigoplus T\mathcal{H}_s')\\ &=(\mathcal{H}_{\infty}'\bigoplus\mathcal{H}_s')\ominus(\mathcal{H}_{\infty}'\bigoplus T\mathcal{H}_s')\\
&=\mathcal{H}_s'\ominus T\mathcal{H}_s'=W'
\end{align*}
We use the following fact:\\
If $\mathcal{H}=\mathcal{H}_1\oplus\mathcal{H}_2$ is such that $\mathcal{H}_1$, $\mathcal{H}_2$ are reducing subspaces of $T$, then $$T\mathcal{H}=T\mathcal{H}_1\oplus T\mathcal{H}_2.$$
This is clear because $$h=h_1+h_2,\quad h_1\perp h_2,\quad h_1\in\mathcal{H}_1, h_2\in\mathcal{H}_2$$
$Th=Th_1+Th_2$ and $\langle Th_1, Th_2\rangle=\langle T^*Th_1, h_2\rangle=0.$
\end{proof}

The following result is a Wold-type decomposition for a pair of double commuting operators in $\mathcal{D}$.
\begin{thm}Let be $T_1, T_2\in\mathcal{D}$ double commuting. Then $\mathcal{H}$ has the following orthogonal decomposition $$\mathcal{H}=\mathcal{H}_{\infty\infty}\bigoplus\mathcal{H}_{\infty s}\bigoplus\mathcal{H}_{s\infty}\bigoplus\mathcal{H}_{ss},$$ where $\mathcal{H}_{\infty\infty},$ $\mathcal{H}_{\infty s} ,$ $\mathcal{H}_{s\infty},$ $\mathcal{H}_{ss}$ are reducing spaces of $T_i (i=1,2)$ and
\begin{align*}
\mathcal{H}_{\infty\infty}&=\bigcap_{m,n=1}^{\infty} T_1^mT_2^n\mathcal{H}\\
\mathcal{H}_{\infty s} &=\bigg(\bigcap_{m=1}^{\infty} T_1^m\mathcal{H}\bigg)\bigcap\bigg(\bigoplus_{n=0}^{\infty}T_2^nW_2\bigg)\\
\mathcal{H}_{s\infty}&=\bigg(\bigoplus_{m=0}^{\infty}T_1^mW_1\bigg)\bigcap\bigg(\bigcap_{n=1}^{\infty} T_2^n\mathcal{H}\bigg)\\
\mathcal{H}_{ss}&=\bigg(\bigoplus_{m=0}^{\infty}T_1^mW_1\bigg)\bigcap\bigg(\bigoplus_{n=0}^{\infty} T_2^nW_2\bigg).
\end{align*}
\end{thm}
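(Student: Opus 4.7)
The plan is to iterate Theorem 2: apply it first to $T_1$ on $\mathcal{H}$, and then to $T_2$ restricted to each of the two reducing summands thereby produced, yielding a four-fold orthogonal decomposition. Identifying the four pieces with the explicit formulas in the statement is the technical heart of the argument and reduces to a single commutation identity.

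A first application of Theorem 2 to $T_1$ gives $\mathcal{H}=\mathcal{H}_\infty^{(1)}\oplus\mathcal{H}_s^{(1)}$ with $\mathcal{H}_\infty^{(1)}=\bigcap_{m\geq 1}T_1^m\mathcal{H}$ and $\mathcal{H}_s^{(1)}=\bigoplus_{m\geq 0}T_1^m W_1$. The first step is to verify that both summands also reduce $T_2$. Using $T_1T_2=T_2T_1$, any $h=T_1^m h_m$ satisfies $T_2h=T_1^m(T_2h_m)\in T_1^m\mathcal{H}$, so $\mathcal{H}_\infty^{(1)}$ is $T_2$-invariant; the relation $T_1T_2^*=T_2^*T_1$ gives $T_2^*$-invariance. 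For $\mathcal{H}_s^{(1)}$, the double commuting forces $W_1=\ker T_1^*$ to reduce $T_2$ (if $T_1^*h=0$ then $T_1^*T_2h=T_2T_1^*h=0$, and similarly for $T_2^*$); hence each $T_1^nW_1$ reduces $T_2$, and so does their orthogonal direct sum. Because each of $\mathcal{H}_\infty^{(1)},\mathcal{H}_s^{(1)}$ reduces $T_2$, the restriction $T_2|_K$ satisfies $(T_2|_K)^-=T_2^-|_K$, so condition (\ref{conditiontonT-}) descends and $T_2|_K\in\mathcal{D}$. A second application of Theorem 2 to $T_2|_{\mathcal{H}_\infty^{(1)}}$ and $T_2|_{\mathcal{H}_s^{(1)}}$ then produces four orthogonal reducing summands.

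The main obstacle is identifying these four pieces with the formulas in the statement. The key lemma is the commutation $T_1T_2^-=T_2^-T_1$: double commuting makes $T_1$ commute with $T_2^*T_2$, hence with $(T_2^*T_2)^{-1}$ and with $T_2^-=(T_2^*T_2)^{-1}T_2^*$. From this one would deduce the intersection identity $T_1^mT_2^n\mathcal{H}=T_1^m\mathcal{H}\cap T_2^n\mathcal{H}$: for the nontrivial inclusion, any $h\in T_1^m\mathcal{H}\cap T_2^n\mathcal{H}$ can be written as $h=T_2^n g$ with $g=(T_2^-)^nh$, and this $g$ lies in $T_1^m\mathcal{H}$ because $(T_2^-)^n$ commutes with $T_1^m$. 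Iterating,
$$\bigcap_n T_2^n\mathcal{H}_\infty^{(1)}=\bigcap_n T_2^n\bigcap_m T_1^m\mathcal{H}=\bigcap_{m,n}T_1^mT_2^n\mathcal{H}=\mathcal{H}_{\infty\infty},$$
and by uniqueness of the Wold decomposition of $\mathcal{H}_\infty^{(1)}$ with respect to $T_2$, the orthogonal complement of this in $\mathcal{H}_\infty^{(1)}$ must be $\mathcal{H}_{\infty s}$. The same argument applied to $\mathcal{H}_s^{(1)}$ identifies $\mathcal{H}_{s\infty}$ and $\mathcal{H}_{ss}$. Finally, each of the four summands reduces both $T_1$ and $T_2$: the outer decomposition is $T_1$-reducing by Theorem 2, the inner decompositions are $T_2$-reducing by Theorem 2, and $T_1$-reducingness of the inner pieces follows either by symmetry (running the whole argument with the roles of $T_1$ and $T_2$ interchanged and invoking uniqueness) or directly from the explicit intersection formulas together with $T_1T_2^-=T_2^-T_1$.
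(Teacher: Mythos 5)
Your proposal is correct in substance but organized quite differently from the paper's proof. The paper's argument is essentially three lines: let $Q_i$ be the orthogonal projection onto $\bigcap_{n\geq 1} T_i^n\mathcal{H}$ (supplied by Theorem 1 applied to $T_i$), observe that double commuting forces $Q_1Q_2=Q_2Q_1$ (each $Q_i$ is the strong limit of $T_i^n(T_i^-)^n$, and $T_1,T_1^-$ commute with $T_2,T_2^-$), and expand $I=Q_1Q_2+Q_1(I-Q_2)+(I-Q_1)Q_2+(I-Q_1)(I-Q_2)$; the four summands are then commuting products of projections, hence orthogonal projections whose mutually orthogonal ranges are the pairwise intersections appearing in the statement. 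You instead iterate Theorem 2: decompose $\mathcal{H}$ under $T_1$, check that both summands reduce $T_2$ and that $T_2$ restricted to each stays in $\mathcal{D}$, and decompose again. Your route costs more bookkeeping (the verification that $(T_2|_K)^-=T_2^-|_K$ on a reducing subspace $K$ and that the defining condition $(T^n)^-=(T^-)^n$ descends to restrictions), but it also supplies something the paper leaves implicit: the identification $T_1^mT_2^n\mathcal{H}=T_1^m\mathcal{H}\cap T_2^n\mathcal{H}$ via the commutation $T_1T_2^-=T_2^-T_1$, which is needed in either approach to recognize $Q_1Q_2\mathcal{H}$ as $\bigcap_{m,n}T_1^mT_2^n\mathcal{H}$. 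One soft spot in your write-up: the appeal to ``uniqueness of the Wold decomposition'' to identify the complement of $\mathcal{H}_{\infty\infty}$ inside $\bigcap_m T_1^m\mathcal{H}$ with $\bigl(\bigcap_m T_1^m\mathcal{H}\bigr)\cap\bigl(\bigoplus_n T_2^nW_2\bigr)$ is not immediate, since invoking the uniqueness clause of Theorem 2 would first require exhibiting that intersection as $\bigoplus_n T_2^nW'$ for a wandering subspace $W'$ of the restricted operator. The cleanest way to close this is precisely the paper's commuting-projection identity $Q_1(I-Q_2)\mathcal{H}=Q_1\mathcal{H}\cap(I-Q_2)\mathcal{H}$, all of whose ingredients you have already established.
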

\begin{proof}
We denote by $Q_i$ the orthogonal projection on $\displaystyle\bigcap_{n=1}^{\infty}T_i^n\mathcal{H},\hspace{2mm}(i=1,2)$. From hypothesis, $Q_1$, $Q_2$ are commuting. The decomposition given in  Theorem 3 follows from the identity $$I=Q_1Q_2+Q_1(I-Q_2)+(I-Q_1)Q_2+(I-Q_1)(I-Q_2).$$\end{proof}

\bigskip
{\bf Acknowledgment.} I would like to thank Dr. Olivia Constantin for
introducing me in the theory of Bergman spaces and for her useful remarks. Also, I thank the referee for the careful reading of the paper and his/her valuable and useful comments  regarding our paper.

%%%%%%%%%%%%%%%%%%%%%%%%%%%%%%%%%%%%%%%%%%%%%%%%%%%%%%%%%%%%%

\end{document}